\newtheorem{thm}{\bf Theorem}[section]
\newtheorem{eg}[thm]{\bf Example}
\newtheorem{prop}[thm]{\bf Proposition}
\newtheorem{rem}[thm]{\bf Remark}
\newtheorem{mydef}[thm]{\bf Definition}
\newtheorem{lem}[thm]{\bf Lemma}
\newcommand{\CC}{\mathbb{C}}
\newcommand{\ZZ}{\mathbb{Z}}
\newcommand{\U}{\"u}
\DeclareMathOperator{\Fl}{\mathrm{Fl}}
\DeclareMathOperator{\Gr}{\mathrm{Gr}}
\DeclareMathOperator{\Sym}{\mathrm{Sym}}
\DeclareMathOperator{\rk}{\mathrm{rank}}
\DeclareMathOperator{\Pic}{\mathrm{Pic}}
\renewcommand{\emptyset}{\varnothing}
\newcolumntype{C}[1]{>{\centering\let\newline\\\arraybackslash\hspace{0pt}}m{#1}}
\title[A Pl\"ucker coordinate mirror for type A flag varieties]{A Pl\"ucker coordinate mirror \\ for type A flag varieties}
\author{Elana Kalashnikov}
\address{{Department of Mathematics \\ 
Harvard University \\ 
Cambridge, MA 02138}}
\address{{Faculty of Mathematics \\ 
Higher School of Economics \\ 
Moscow, Russia}}
\email{kalashnikov@math.harvard.edu}
\begin{document}
\begin{abstract}
We introduce a superpotential for partial flag varieties of type~$A$. This is a map $W: Y^\circ \to \CC$, where $Y^\circ$ is the complement of an anticanonical divisor on a product of Grassmannians.  The map $W$ is expressed in terms of Pl\U cker coordinates of the Grassmannian factors. This construction generalizes the Marsh--Rietsch Pl\U cker coordinate mirror for Grassmannians. We show that in a distinguished cluster chart for $Y$, our superpotential agrees with earlier mirrors constructed by Eguchi--Hori--Xiong and Batyrev--Ciocan-Fontanine--Kim--van Straten. Our main tool is quantum Schubert calculus on the flag variety.  
\end{abstract}

\maketitle

\section{Introduction}\label{sec:intro}
In this paper, we use quantum Schubert calculus to propose a mirror partner -- a superpotential -- for type A flag varieties that generalizes the Pl\U cker coordinate superpotential for Grassmannians introduced by Marsh--Rietsch \cite{MarshRietsch}. Schubert classes on a partial flag variety  $\Fl(n;r_1,\dots,r_\rho)$  are indexed by tuples of partitions. We can associate an element of the coordinate ring of the product of Grassmannians $Y=\prod_{i=1}^\rho \Gr(r_{i-1},r_{i-1}-r_i)$ to every Schubert class. We write down a special sum of ratios of Schubert classes expressing the anti-canonical class of the flag variety, and re-interpret it as a rational function on $Y$. This is the Pl\U cker coordinate mirror for the flag variety.  

Batyrev--Ciocan-Fontanine--Kim--van Straten~\cite{flagdegenerations} have also constructed a conjectural superpotential for a flag variety, by taking the Gelfand--Cetlin toric degeneration of the flag variety and looking at the Hori--Vafa mirror of the degenerate toric variety. The BCFKvS superpotential generalizes earlier mirrors proposed by Eguchi--Hori--Xiong (for the Grassmannian)\cite{eguchi} and Givental \cite{giventalflag} (for full flag manifolds). The authors of \cite{flagdegenerations} conjecture that the period of this superpotential agrees with the quantum period (the top term of Givental's J function) of the flag variety. Such superpotentials are called \emph{Laurent polynomial mirrors} and conjectured to exist for all Fano varieties. 

The main result of this paper is that in a particular cluster chart, the Pl\U cker coordinate superpotential and the BCFKvS superpotential  -- which are obtained in very different ways -- are isomorphic. 

\begin{thm}\label{thm:first} Let $\Fl(n;r_1,\dots,r_\rho)$ be a flag variety, and $W_P$ the Pl\U cker coordinate superpotential as defined in Definition \ref{def:mirror}.  The pullback of $W_P$ along the rectangles cluster chart (see Definition \ref{def:rectangles}) is isomorphic to the Batyrev--Ciocan-Fontanine--Kim--van Straten mirror.
\end{thm}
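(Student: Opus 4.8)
The strategy is to compute both superpotentials explicitly in the rectangles cluster chart and match them term by term.

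First, I would recall the combinatorics: Schubert classes on $\Fl(n;r_1,\dots,r_\rho)$ are indexed by tuples of partitions $(\lambda^{(1)},\dots,\lambda^{(\rho)})$ with $\lambda^{(i)}$ fitting in the $r_{i-1}\times(r_{i-1}-r_i)$ box, and under the identification with the coordinate ring of $Y=\prod \Gr(r_{i-1},r_{i-1}-r_i)$ these become Plücker coordinates. The definition of $W_P$ (Definition~\ref{def:mirror}) expresses the anticanonical class of the flag variety as a distinguished sum of ratios of Schubert classes; I need the precise list of numerators and denominators appearing in that sum. I expect the summands to fall into two families: ``deformation'' terms coming from the partial order on Schubert divisors within each Grassmannian factor (giving the Givental/BCFKvS-type Laurent monomials $x_{i,j+1}/x_{i,j}$), and ``quantum'' terms coming from the quantum correction in the anticanonical class, which carry the Novikov/degree variables $q_i$.

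Second, I would write out the rectangles cluster chart (Definition~\ref{def:rectangles}) for $Y$: this is a particular choice of cluster in the cluster structure on the multi-cone over $Y$, whose cluster variables are indexed by rectangular Young diagrams (equivalently, by ``frozen-like'' Plücker coordinates together with the rectangular ones). I would then substitute the Plücker coordinates appearing in $W_P$ by their Laurent expansions in the rectangles cluster variables. The key computational input is that each non-rectangular Plücker coordinate that shows up in a numerator or denominator of $W_P$, when expressed in the rectangles chart, is a sum of Laurent monomials, but — because of how the rectangles were chosen — the relevant ratio telescopes/collapses so that each summand of $W_P$ becomes a single Laurent monomial in the cluster variables, matching exactly one monomial of the BCFKvS superpotential. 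This is the technical heart: showing that the ``extra'' terms in the Plücker expansions cancel in the ratios, which I would do using the quantum Pieri / quantum Schubert calculus relations on $\Fl(n;\br)$ (the paper flags this as the main tool) together with the three-term Plücker relations restricted to the rectangles chart.

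Third, having matched $W_P$ with a Laurent polynomial in the cluster variables, I would compare it to the BCFKvS superpotential as written in \cite{flagdegenerations} (the Gelfand--Cetlin quiver mirror). Both are Laurent polynomials on the same torus; I would exhibit the explicit monomial change of coordinates (an invertible monomial substitution, possibly rescaling the $q_i$) identifying the two, and check that the anticanonical normalization on the $Y$ side corresponds to the Calabi--Yau/volume form on the BCFKvS side, so that the isomorphism is one of Landau--Ginzburg models and not merely of the bare functions.

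The main obstacle I anticipate is the second step — proving that the ratios of Plücker coordinates in $W_P$ restrict to single Laurent monomials in the rectangles chart. The Plücker coordinates for non-rectangular shapes have genuinely many terms in any cluster expansion, so the cancellation is a real phenomenon that must be controlled; I expect it to require an inductive argument on $\rho$ (peeling off one Grassmannian factor at a time) or on the size of the partitions, using quantum Schubert calculus to identify the numerators and denominators as images of quantum cohomology classes whose ratios are forced to be monomial by a dimension/degree count. Organizing this induction cleanly, and handling the interaction between the quantum ($q$-carrying) terms across different factors, is where the bulk of the work will lie.
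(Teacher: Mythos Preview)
Your proposal has the right overall shape---compute both sides in the rectangles chart and match---but the central mechanism you anticipate is wrong, and this changes the architecture of the argument.

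You expect each ratio in $W_P$ to collapse to a \emph{single} Laurent monomial in the rectangles chart, with ``extra terms cancelling in the ratios.'' This is false already for the Grassmannian. In the paper's $\Gr(4,2)$ example, the summand $p_{\yng(2,1)}/p_{\yng(1,1)}$ of $W_P$ expands in the rectangles chart to the \emph{two} monomials $\tfrac{p_{\yng(2)}}{p_{\yng(1)}}+\tfrac{p_{\yng(2,2)}p_\emptyset}{p_{\yng(1)}p_{\yng(1,1)}}$, each of which matches one arrow of the ladder diagram. The correct picture is the reverse of what you describe: going from the BCFKvS side to the Pl\"ucker side, groups of ladder-diagram monomials are \emph{combined} into a single ratio using three-term Pl\"ucker relations. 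No cancellation occurs; rather, Pl\"ucker relations perform an aggregation.

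The paper's proof exploits this by not redoing the Grassmannian computation at all. It takes the Marsh--Rietsch result (Proposition~\ref{prop:grassmannian}) as a black box: within each block of the ladder diagram, the change of variables $z_{i,j}\mapsto p_{i\times j}/p_{(i-1)\times(j-1)}$ (scaled by $q_1\cdots q_{k-1}$ in the $k$th block) already turns the internal arrows into the Pl\"ucker mirror $W_F$ of the product $\prod\Gr(r_{i-1},r_{i-1}-r_i)$. Both $\phi^*(W_T)$ and $W_P$ are then compared to $W_F$, and the argument reduces to matching the \emph{discrepancies}: on the BCFKvS side these are the arrows crossing between blocks, and on the $W_P$ side they are the extra terms in $G^i_\lambda$ coming from cases (2)--(4) of Lemma~\ref{lem:formula}. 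That match is a direct inspection; no induction on $\rho$ is needed.

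One further misattribution: quantum Schubert calculus is used to \emph{define} $W_P$ (it produces the formulas in Lemma~\ref{lem:formula}), but the comparison theorem itself is proved with classical Pl\"ucker relations only. Your plan to invoke quantum Pieri in the matching step is aiming the tool at the wrong target.
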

The enriched structure of the Grassmannian Pl\U cker coordinate superpotential, arising from the cluster structure of the Grassmannian, allows Marsh--Rietsch \cite{MarshRietsch} to prove a very strong mirror statement.  Their results in particular imply the conjecture of Batyrev--Ciocan-Fontanine--Kim--van Straten~\cite{flagdegenerations} for Grassmannians. This gives one of the few proven examples of Laurent polynomial mirrors outside of the toric context.  We hope that the Pl\U cker coordinate mirror proposed here will open up a way of proving the conjecture for partial flag varieties. This result would have significant implications for the Fano classification program, which seeks to classify Fano varieties by their mirror partners: it would extend the universe of proven mirrors from complete intersections in Grassmannians and toric varieties to flag varieties.

\paragraph{\emph{Plan of the paper}} In \S \ref{sec:summary}, we explain more carefully the relation of the Pl\U cker coordinate mirror to other mirror constructions in the literature and the Fano classification program, and summarize the construction of the Pl\U cker coordinate mirror. In \S \ref{sec:quantumcohomology} we review the basics of quantum cohomology and quantum Schubert calculus, and recall the results we will need. In \S \ref{sec:construction}, we use quantum Schubert calculus on the flag variety to define the Pl\U cker coordinate superpotential of the flag variety. In \S \ref{sec:comparison}, we prove the above theorem. 
In the last section, \S \ref{sec:examples} we look closer at a family of examples, discussing the relationship in this case with the Gu--Sharpe proposal. 
\subsection*{Acknowledgments}
The author is very grateful for helpful conversations with Tom Coates, Wei Gu, Konstanze Rietsch, and Lauren Williams.

\section{Summary and context}\label{sec:summary}
In addition to the Pl\U cker coordinate and BCFKvS superpotentials, there are a number of other mirror constructions for flag varieties in the literature, corresponding to the different flavours of constructions of flag varieties. Grassmannians and type A flag varieties can be constructed either as homogeneous spaces $G/P$ or as GIT quotients $V//G$. For any homogeneous space $G/P$,  there is the Lie theoretic mirror proposed by Rietsch \cite{rietsch2006}. She showed via the Peterson variety that the critical locus of the superpotential computes the quantum cohomology ring of $G/P$.  

Alternatively, viewing a flag variety as a GIT quotient $V//G$, one can apply the Abelian/non-Abelian correspondence. The Abelian/non-Abelian correspondence is a powerful and quite general tool that allows one to replace $G$ with a maximal torus $T\subset G$, then bring to bear the power of mirror constructions for toric varieties $V//T$ \cite{horivafa,twoproofs,CiocanFontanineKimSabbah2008,Webb2018,gusharpe, gukalashnikov}. Gu--Sharpe \cite{gusharpe} propose a mirror for $V//G$ which in particular is a Laurent polynomial in $\dim(V//T)$ variables with $\rk \Pic(V//G)$ quantum parameters. The Jacobi ring of the mirror computes the quantum cohomology of $V//G$ when $V//G$ is a quiver flag variety \cite{gukalashnikov}. 

Type A flag varieties are also Fano varieties. Conjecturally, under mirror symmetry $n$-dimensional Fano varieties up to deformation correspond to certain Laurent polynomials in $n$-variables up to mutation \cite{kasprzyktveiten}. Proving this conjecture would have major implications for the Fano classification program, making the Fano classification problem entirely combinatorial. A Laurent polynomial is said to be \emph{mirror} to a Fano variety in this sense if the \emph{regularized quantum period} of the Fano variety is equal to the \emph{classical period} of the Laurent polynomial. The quantum period of a Fano variety is an invariant built out of genus 0 Gromov--Witten invariants; it is a piece of Givental's J function. See \cite{fanomanifolds} for a good introduction to this perspective. We call these mirrors \emph{Laurent polynomial mirrors}. Laurent polynomial mirrors are expected for any Fano variety. The Gu--Sharpe mirror is not a mirror in this sense as it has too many variables: $\dim(V//T)$ instead of $\dim(V//G)$.

The Batyrev--Ciocan--Fontanine--Kim--van Straten superpotential is a Laurent polynomial mirror in this sense. Laurent polynomial mirrors can be constructed for any smooth Fano toric variety and most Fano toric complete intersections. More generally, a toric degeneration of a Fano variety can allow one to produce a conjectural Laurent polynomial mirror. In \cite{flagdegenerations}, they prove that there is a toric degeneration of the flag variety to the toric variety associated to the Newton polytope of the BCFKvS mirror. 

In the case of the Grassmannian, the Pl\U cker coordinate mirror \cite{MarshRietsch} sits between the Lie theoretic mirror and the BCFKvS mirror. It is isomorphic to the Lie theoretic mirror, but much simpler and easier to work with.  The connection between the Pl\U cker coordinate mirror and the Gu-Sharpe proposal largely remains mysterious; however, in Example \ref{eg:grassmanniancrit}, we explain how to explicitly identify the critical locus of the Gu--Sharpe mirror and the Pl\U cker coordinate mirror of the Grassmannian. 

The Pl\U cker coordinate superpotential for $\Gr(n,r)$ is a Laurent polynomial in the Pl\U cker coordinates of the dual Grassmannian. Pl\U cker coordinates are redundant, so like the Gu--Sharpe mirror it has too many variables to be a mirror in the sense of Fano classification program. However, the cluster structure  of the coordinate ring of the Grassmannian \cite{scott2006} gives a way to produce many Laurent polynomial mirrors to the Grassmannian. Every cluster chart (certain algebraically independent sets of Pl\U cker coordinates) gives an expansion of the Pl\U cker coordinate mirror into a Laurent polynomial in the correct number of variables. These Laurent polynomials are related by quiver mutations. There is a distinguished cluster chart, called the rectangles cluster chart, where Marsh--Rietsch show that the Pl\U cker coordinate superpotential expands into a Laurent polynomial isomorphic to the BCFKvS mirror. As a corollary of their powerful results for the Pl\U cker coordinate superpotential, they prove the conjecture of \cite{flagdegenerations} in the Grassmannian case: they show that the period of the BCFKvS mirror agrees with the quantum period of the Grassmannian. This can even be extended to complete intersections in Grassmannians \cite{Przyjalkowski_2015}.

We propose an analogue of the Pl\U cker coordinate superpotential for type A flag varieties, using quantum Schubert calculus of the flag variety.  We now briefly summarize its construction. To do this, we need to take a closer look at the Grassmannian case. Let $\Gr(n,r)$ denote the Grassmannian of $r$-dimensional quotients of $\CC^n$. The quantum cohomology ring of the Grassmannian is a quotient of the symmetric polynomial ring in $r$ variables, with coefficients in $\CC[q]$, where $q$ is the quantum parameter. Schur polynomials $s_\lambda$ span the ring of symmetric polynomials; they are indexed by partitions $\lambda$ with length at most $r$.  An additive basis for the quantum cohomology ring of the Grassmannian are the Schur polynomials indexed by partitions $\lambda$ of length at most $r$ satisfying $\lambda_1 \leq n-r$. We denote these partitions $S(n,r)$. These same partitions index the Pl\U cker coordinates of the Grassmannian. 

 The Pl\U cker coordinate superpotential the Grassmannian $\Gr(n,r)$ shares a common structure with the mirror proposed by Gu--Sharpe: both can be interpreted as a sum of $n$ copies of the hyperplane class in the Grassmannian. Marsh--Rietsch take $n$ equations of the form 
 \[s_{\yng(1)} * s_\lambda=q^i s_{\mu}\]
 where $i=0,1$ depending on the partition. Here $\lambda=(a,\dots,a)$ is a rectangular partition either maximally wide or maximally tall: we denote the set of such partitions $M(n,r)$. After localizing, the sum 
 \[ \sum_{\lambda \in M(n,r)} \frac{q^{i} s_\mu}{s_\lambda}\]
is equal to $n s_{\yng(1)}=-K_{\Gr(n,r)}$, the anti-canonical class.  The sum produces a Laurent polynomial in the Pl\U cker coordinates of the dual Grassmannian $\Gr(n,n-r)$ by replacing $s_\lambda$ with the associated Pl\U cker coordinate. So, for example, the Marsh--Riestch Pl\U cker coordinate superpotential for $\Gr(4,2)$ is
\[\frac{p_{\yng(1)}}{p_{\emptyset}}+\frac{p_{\yng(2,1)}}{p_{\yng(2)}}+\frac{p_{\yng(2,1)}}{p_{\yng(1,1)}}+\frac{q p_{\yng(1)}}{p_{\yng(2,2)}}.\]
In \cite{MarshRietsch}, the open subvariety of the dual Grassmannian on which the Pl\U cker coordinate superpotential is a function (i.e. where $p_\lambda \neq 0$, $\lambda \in M(n,r)$) is denoted $\Gr(n,n-r)^\circ$. 
\paragraph{\emph{The Pl\U cker coordinate superpotential for a flag variety}}
The two main ingredients of the Pl\U cker coordinate superpotential of the Grassmannian are, firstly,  a way of writing the anti-canonical class as a sum of ratios of Schur polynomials, and secondly, a way of associating to a Schur polynomial a function on another Grassmannian. 

To generalize this to type A flag varieties, we use Schubert classes and quantum Schubert calculus. Let $\Fl(n;r_1,\dots,r_\rho)$ be the flag variety of quotients of $\CC^n$.   The detailed description of the first ingredient -- a way of writing the anti-canonical class as a sum of ratios of Schubert classes -- is in \S \ref{sec:construction}. For the second ingredient, we use a less common way of indexing Schubert classes: we index Schubert classes by tuples of partitions $\underline{\mu}=(\mu_1,\dots,\mu_\rho)$, where $\mu_i \in S(r_{i-1},r_{i})$. Observe that each such tuple  defines a function $p_{\underline{\mu}}$  on the product of Grassmannians $Y=\prod_{i=1}^\rho \Gr(r_{i-1},r_{i-1}-r_i)$: $p_{\underline{\mu}}=\prod_{i=1}^\rho p^i_{\mu_i}$, where $p^i_\lambda$ is the pullback to $Y$ of the Pl\U cker coordinate $p_\lambda$ on the $i^{th}$ Grassmannian factor. 

In Definition \ref{def:mirror}, we use the breakdown of the anti-canonical class to write down a Laurent polynomial in the Pl\U cker coordinates of $Y$, which is a function on
\[Y:=\prod_{i=1}^\rho \Gr(r_{i-1},r_{i-1}-r_i)^\circ. \]
\begin{eg} For the flag variety $\Fl(4;2,1)$,
\[W_P:=\frac{p^1_{\yng(1)}}{p^1_{\emptyset}}+\frac{p^1_{\yng(2,1)}+q_1}{p^1_{\yng(2)}}+\frac{p^1_{\yng(2,1)}}{p^1_{\yng(1,1)}}+\frac{q_1 p^1_{\yng(1)} p^2_{\yng(1)}}{p^1_{\yng(2,2)}}+\frac{p^2_{\yng(1)}}{p^2_{\emptyset}}+\frac{q_2}{p^2_{\yng(1)}}.\]
\end{eg}
 As for the Grassmannian, the Pl\U cker coordinate mirror of the flag variety gives many different conjectural Laurent polynomial mirrors. The cluster structure involved in this proposal is not the cluster structure of the flag variety or its Langlands dual, but rather that of a product of Grassmannians, $Y$.  On the mirror side, there is no non-trivial fibration structure; unlike like the flag variety, $Y$ is just a product. This structure is instead encoded in the superpotential: $W_P$ is almost the mirror of $Y$, except with certain correction terms arising from the non-trivial product structure of the flag variety. These correction terms keep track of the difference between using the quantum product to multiply in $Y$ or the flag variety.  

\section{Quantum cohomology and quantum Schubert calculus}\label{sec:quantumcohomology}
\subsection{Gromov--Witten invariants and quantum cohomology}
We briefly review small quantum cohomology. Let $Y$ be a smooth projective variety. Given $n \in \ZZ_{\geq 0}$ and $\beta \in H_2(Y)$, let  $M_{0,n}(Y,\beta)$ be the moduli space of genus zero stable maps to $Y$ of class $\beta$, and with $n$ marked points~\cite{Kontsevich95}. While this space may be highly singular and have components of different dimensions, it has a \emph{virtual fundamental class} $[M_{0,n}(Y,\beta)]^{virt}$ of the expected dimension~\cite{BehrendFantechi97,LiTian98}. There are natural evaluation maps $ev_i: M_{0,n}(Y,\beta) \to Y$ taking the class of a stable map $f: C \to Y$ to $f(x_i)$, where $x_i \in C$ is the $i^{th}$ marked point. There is also a line bundle $L_i \to M_{0,n}(Y,\beta)$ whose fiber at $f: C \to Y$ is the cotangent space to $C$ at $x_i$. The first Chern class of this line bundle is denoted $\psi_i$. Define:
\begin{equation}
  \label{eq:GW}
  \langle \tau_{a_1}(\alpha_1),\dots,\tau_{a_n}(\alpha_n) \rangle_{n,\beta} = \int_{[M_{0,n}(Y,\beta)]^{virt}} \prod_{i=1}^n ev_i^*(\alpha_i) \psi_i^{a_i}
\end{equation}
where the integral on the right-hand side denotes cap product with the virtual fundamental class.  If $a_i=0$ for all $i$, this is called a (genus zero) Gromov--Witten invariant and the $\tau$ notation is omitted; otherwise it is called a descendant invariant. It is deformation invariant. 

Now suppose that $Y$ is a smooth Fano variety.  The \emph{quantum cohomology ring} is defined by giving a deformation of the usual cup product of $H^*(Y)$ for every $t \in H^*(Y)$. The structural constants defining the new product are given by Gromov--Witten invariants.

Let $\{T_i\}$ be a homogeneous basis of $H^*(Y,\CC))$ and $\{T^i\}$ a dual basis. Let $t \in H^2(Y,\CC)$. The small quantum product is defined by 
\[\langle T^a \circ_t T^b, T^c\rangle:=\sum_{d \in H_2(Y)} e^{\int_{d}t} \int_{[M_{0,3}(Y,d)]^{virt}} ev_1^*(T^a) ev_2^*(T^b) ev_3^*(T^c).\]
The fact that $Y$ is Fano ensures that this sum is finite. 

  If $T_1,\dots,T_r$ are a basis of $H^2(Y,\CC)$, and $t_i$ a parameter for $T_i$, define $q_i:=e^{t_i}.$ For $d=\sum_{i=1}^r d_i T_i$, write $q^d=q_1^{d_1} \cdots q_r^{d_r}$. We can re-write the above as
\[\langle T^a \circ T^b, T^c\rangle:=\sum_{d \in H_2(Y)} q^d \int_{[M_{0,3}(X,d)]^{virt}} ev_1^*(T^a) ev_2^*(T^b) ev_3^*(T^c).\]
We view the quantum product as giving a product structure on the cohomology of $H^*(X)$ with coefficients in the polynomial ring in the $q_i$. 
\subsection{Quantum cohomology of flag varieties}
Let $\Fl(n;r_1,\dots,r_\rho)=\Fl(n;\underline{r})$ denote the flag variety of quotients: it parametrizes quotients \[\CC^n \to V_1 \to \cdots \to V_\rho \to 0\] where $V_i$ has dimension $r_i$. It is equipped with $\rho$ tautological quotient bundles of ranks $r_1,\dots,r_\rho$. 

Schubert classes are geometrically described cohomology classes in flag varieties; they give free generators for the cohomology of the flag variety. Schubert classes on the flag variety of quotients $\Fl(n;r_1,\dots,r_\rho)$ can be indexed by permutations (as is most common), but also as tuples of partitions. This is the indexing set we use. 

Recall from the introduction that $S(n,r)$ denotes the partitions generating the cohomology of the Grassmannian -- that is, partitions fitting inside of an $r \times (n-r)$ box. There is a Schubert class associated to each set of partitions $\underline{\mu}=(\mu_1,\dots,\mu_\rho)$, where $\mu_i \in S(r_{i-1},r_{i})$. We set $r_0:=n$. We set $S(n,\underline{r})$ to be this set of tuples of $\rho$ partitions:
\[S(n,\underline{r}):=\prod_{i=1}^\rho S(r_{i-1},r_{i}).\]
We denote the class of the Schuberty variety in the quantum cohomology ring corresponding to $\underline{\mu}$ as $s_{\underline{\mu}}$. For a partition $\lambda$, the transpose partition is denoted $\lambda^t$.

For $\lambda \in S(r_{i-1},r_i)$, we set $s^i_\lambda$ to be the special Schubert class corresponding to $\underline{\mu}$, where $\mu_i=\lambda$ and otherwise $\mu_j=\emptyset$. The class $s^i_\lambda$ is a symmetric polynomial in the Chern roots of the rank $r_i$ tautological quotient bundle on $\Fl(n;r_1,\dots,r_\rho)$. The class $s^i_{\yng(1)}$ is the first Chern class of the rank $r_i$ tautological quotient bundle. We extend this notation slightly: if $\lambda \in S(r_{i-1},r_i)$ and $\lambda' \in S(r_{j-1},r_j)$ for $i \neq j$, we write $s^{i,j}_{\lambda,\lambda'}$ for the Schubert class corresponding to $\underline{\mu}$, where $\mu_i=\lambda,$ $\mu_j=\lambda'$, and otherwise $\mu_k=\emptyset$. We will often consider partitions $\lambda$ where $\lambda$ is rectangular. We denote the class associated to the partition which is an $a \times b$ grid of boxes as $s^i_{a \times b}.$

The quantum cohomology of the flag variety can be written as \[\CC[x_{ij},q_i: 1 \leq i \leq \rho, 1 \leq j \leq r_i]^{\Sym_{r_1} \times \cdots \times \Sym_{r_\rho}}/I\] for an explicitly described $I$ \cite{astashkevich1995,kim}.  The $x_{i1},\dots,x_{ir_i}$ are the Chern roots of the tautological quotient bundles. Here $\Sym_{r_i}$ is the symmetric group which acts on the polynomial ring by permuting the $x_{i1},\dots,x_{i r_i}$, and $I$ is an explicitly described ideal. A general description for the quantum product of two Schubert classes as a sum of Schubert classes is not known, except in special cases.  The expansion of 
\[s^i_{\yng(1)} * s_{\underline{\mu}}\] 
is determined by the quantum Pieri rule of Ciocan--Fontanine \cite{ciocanpieri}.  We will need this rule in the special case where $s_{\underline{\mu}}=s^i_\lambda$, where $\lambda \in M(r_{i-1},r_i)$. Recall from the introduction that
\[M(n,k):=\{\lambda=(a,\dots,a) \in S(n,k) \mid \text{ $\lambda$ is a maximally wide or tall rectangle}\}.\] 
Lemma \ref{lem:formula} records the cases we need in the notation we have been using. In the lemma, we set $s^{0,i}_{\lambda,\lambda'}=s^{i,\rho+1}_{\lambda',\lambda}=0$ unless $\lambda=\emptyset$, in which case  we set them both to $s^i_{\lambda'}$.
\begin{lem}\label{lem:formula} The following equations hold in the quantum cohomology ring of the flag variety $\Fl(n;r_1,\dots,r_\rho)$:
\begin{enumerate}
\item If $a<r_{i-1}-r_i$, then 
\[s^i_{\yng(1)} *s^i_{r_i \times a}=s^i_{\{\underbrace{a+1,a,\dots,a}_{r_i \text{ parts}}\}}.\]
\item If $a<r_i-r_{i+1}$, then
\[s^i_{\yng(1)}*s^i_{a \times (r_{i-1}-r_i)}=s^i_{\{\underbrace{r_{i-1}-r_i,\dots,r_{i-1}-r_i,1}_{a+1 \text{ parts}}\}}+s^{i-1,i}_{\yng(1), a \times (r_{i-1}-r_i)}.\]
\item If $r_i-r_{i+1}\leq a<r_i$, then
\begin{align*}
s^i_{\yng(1)}*s^i_{a \times (r_{i-1}-r_i)}&\\&=s^i_{\{\underbrace{r_{i-1}-r_i,\dots,r_{i-1}-r_i,1}_{a+1 \text{ parts}}\}}\\&+s^{i-1,i}_{\yng(1), a \times (r_{i-1}-r_i)}+q_i s^{i,i+1}_{(a-1)\times (r_{i-1}-r_i-1),(a-(r_i-r_{i+1}))^t} .
\end{align*}
\item If $a=r_i$, then
\[s^i_{\yng(1)}*s^i_{r_i \times( r_{i-1}-r_i)}=s^{i-1,i}_{\yng(1), a \times (r_{i-1}-r_i)}+q_i s^{i,i+1}_{(a-1)\times (r_{i-1}-r_i-1),(a-(r_i-r_{i+1}))^t} .\]
\end{enumerate}
\end{lem}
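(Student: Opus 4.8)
The class $s^i_{\yng(1)}$ is the first Chern class of the $i$-th tautological quotient bundle on $\Fl(n;\underline{r})$, hence a \emph{special} Schubert class in the sense of Ciocan--Fontanine \cite{ciocanpieri}; each product $s^i_{\yng(1)} * s^i_\lambda$ is therefore computed by his quantum Pieri rule (equivalently, since $s^i_{\yng(1)}$ is a divisor, by the quantum Monk/Chevalley formula). The plan is to apply that rule to the four rectangular partitions $\lambda = r_i \times a$ and $\lambda = a \times (r_{i-1}-r_i)$ and translate the output into the tuple-of-partitions indexing. The first step is to fix the dictionary: a tuple $\underline{\mu}$ with $\mu_i \in S(r_{i-1},r_i)$ corresponds to a chain of partitions compatible with the tower of Grassmannian bundles $\Fl(n;r_1,\dots,r_\rho) \to \Fl(n;r_1,\dots,r_{\rho-1}) \to \cdots$, and I would record precisely how a single-box move on $\mu_i$, on $\mu_{i-1}$, and a quantum ``wrap-around'' in the $i$-th fibre direction each read off as an operation on the tuple; this makes the classical and quantum terms of the Pieri rule recognizable as explicit tuples.

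Next, the classical part. Dropping the $q$'s, $s^i_{\yng(1)} \cdot s^i_\lambda$ in $H^*(\Fl(n;\underline{r}))$ is the ordinary Pieri rule read through the bundle tower. Adding one box to a rectangle $\lambda$ sitting in the $r_i \times (r_{i-1}-r_i)$ box either stays inside it -- giving $s^i_{\{a+1,a,\dots,a\}}$ when $\lambda = r_i \times a$ (widen the rectangle, possible since $a < r_{i-1}-r_i$), or $s^i_{\{r_{i-1}-r_i,\dots,r_{i-1}-r_i,1\}}$ with $a+1$ parts when $\lambda = a\times(r_{i-1}-r_i)$ and $a < r_i$ (add a new short row) -- or, when $\lambda$ already fills the top edge ($\lambda_1 = r_{i-1}-r_i$), the extra box overflows into the $(i-1)$-st factor and produces the mixed term $s^{i-1,i}_{\yng(1),\,a\times(r_{i-1}-r_i)}$. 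A full-height-but-not-full-width rectangle $r_i\times a$ cannot overflow, which is why case (1) has no such term; the edge convention for $s^{0,i}_{\lambda,\lambda'}$ fixed before the statement kills the overflow term when $i=1$, and the condition $a<r_i$ kills the new-row term when $a = r_i$ (case (4)). I would verify these terms by a direct Pieri computation on the relevant Grassmannian bundle.

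Finally, the quantum corrections. The curve classes on $\Fl(n;\underline{r})$ are generated by the classes $\beta_j$ dual to $q_j$; comparing $\langle -K_{\Fl}, \beta_j \rangle$ with the codimension of $s^i_{\yng(1)}\cdot s^i_\lambda$ forces the only possible quantum monomial to be a single $q_i$ (no $q_i^2$, no $q_j$ with $j\neq i$, no mixed monomial). Ciocan--Fontanine's rule then says this $q_i$-term is nonzero exactly when $\lambda = a\times(r_{i-1}-r_i)$ with $a \geq r_i - r_{i+1}$, with coefficient $1$ and Schubert class $s^{i,i+1}_{(a-1)\times(r_{i-1}-r_i-1),\,(a-(r_i-r_{i+1}))^t}$. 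The threshold $a \geq r_i - r_{i+1}$ is precisely the condition for the wrap-around move to be length-additive: it is exactly what makes the transposed column $(a-(r_i-r_{i+1}))^t$ a genuine (nonnegative-length) partition, while $a \leq r_i$ keeps it inside the $r_{i+1}\times(r_i-r_{i+1})$ box; the shrunk rectangle $(a-1)\times(r_{i-1}-r_i-1)$ and the transpose record how the cyclic shift redistributes a size-$(r_{i-1}-r_i)$ block and a size-$(r_i-r_{i+1})$ block of indices. When $a < r_i - r_{i+1}$ (case (2)) this column has negative size, so by convention the term vanishes; when $\lambda = r_i\times a$ (case (1)) the rectangle does not reach the far edge, so no quantum move is available.

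The main obstacle is the bookkeeping in this last step: translating Ciocan--Fontanine's rule -- stated in permutation / ``quantum chain'' language -- into the tuple-of-partitions normal form, and pinning down simultaneously the transpose $(a-(r_i-r_{i+1}))^t$, the double truncation $(a-1)\times(r_{i-1}-r_i-1)$, the threshold $a \geq r_i - r_{i+1}$, and the degenerate conventions ($s^{i,\rho+1}=0$ unless the last partition is empty) that govern the case $i=\rho$. The classical terms and the ``only $q_i^1$'' degree argument are routine by comparison.
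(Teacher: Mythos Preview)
Your approach is essentially the same as the paper's: both derive the formula by applying Ciocan--Fontanine's quantum Pieri rule and then translating the output from permutation language into the tuple-of-partitions indexing. The paper likewise calls its argument ``a quick sketch'' and, like you, treats the classical terms as routine.

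The one substantive difference is in how the quantum correction is isolated. The paper writes down explicitly the permutation $\omega$ corresponding to $\lambda = a \times (r_{i-1}-r_i)$, checks Ciocan--Fontanine's combinatorial condition $\omega(r_h) > \max\{\omega(r_h+1),\dots,\omega(r_{l-1})\}$ directly to see that only $h=l=i$ contributes, and then computes the modified permutation $\omega' = \omega\, s_{r_i}\cdots s_{r_{i-1}-1}\, s_{r_{i-1}}\cdots s_{r_{i+1}+1}$ and reads off the resulting tuple (and the threshold $r_{i+1} \geq r_i - a$ for $\omega'$ to survive). Your degree-counting argument, comparing $\langle -K_{\Fl},\beta_j\rangle$ against the codimension, does bound the exponent of each $q_j$, but it does \emph{not} on its own exclude a single $q_j$ with $j \neq i$: for suitable $a$ the inequality $1 + a(r_{i-1}-r_i) \geq r_{j-1}-r_{j+1}$ is easily satisfied. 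Ruling those out genuinely requires the combinatorial condition from the rule, which you invoke rather than verify. Since you ultimately appeal to the rule anyway the argument closes, but the degree step buys you less than you claim, and the paper's explicit permutation computation is what actually carries the weight at that point.
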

This follows immediately from \cite{ciocanpieri}, after translating between the different notation. We give a quick sketch of how to see this, for completeness.
\paragraph{\emph{Changing notation}} Let $\Sym(r_1,\dots,r_\rho)$ be the subset of $\Sym_n$ with descents in $\{r_1,\dots,r_\rho\}$. These are the permutations indexing the Schubert classes of $\Fl(n;\underline{r})$.  Given an element of $\Sym(r_1,\dots,r_\rho)$, in word representation denoted as $a_1\dots a_n$, we write down a tuple of partitions $(\lambda_1,\dots,\lambda_\rho)$. For each $i=0,\dots,\rho$, let $T_i:=\{a_1,\dots,a_{r_i}\}$, which we can re-label as $\{b_1<\cdots<b_{r_i}\}$. If $T_{i+1}=\{b_{k_1},\dots,b_{k_{r_{i+1}}}\}$, then we consider the subset 
\[\{k_1,\dots,k_{r_{i+1}}\} \subset \{1,\dots,r_i\}.\] 
This subset determines a partition in $S(r_{i},r_{i+1})$: it is the partition that, when viewed as giving a path from the bottom left to the top right of an $r_{i+1} \times (r_i-r_{i+1})$ grid, has vertical steps at $k_1,\dots,k_{r_{i+1}}$. 

\begin{eg} Consider $\Fl(4;2,1)$. The identity permutation $1 2 3 4$ gives tuple of partitions $(\emptyset,\emptyset)$. The permutation $2 1 3 4$ gives the tuple $(\emptyset,\yng(1))$. The permutation $1 3 2 4$ gives the tuple $(\yng(1),\emptyset)$. The permutation $3 2 1 4$  corresponds to $(\yng(1,1),\yng(1))$. 
\end{eg}
In general, the transposition $(r_i r_i+1)$  is $s^i_{\yng(1)}$. 
\begin{proof}[Proof of Lemma \ref{lem:formula}]
It is easy to check the classical terms of the formula. We sketch how to obtain the quantum corrections. 
 
 Fix $i$, and let $s_i:=(r_i r_i+1)$. Let $\lambda:=a\times (r_{i-1}-r_i)$. The permutation for $\lambda$ is
\[\omega:=1 2 \dots r_i-a\hspace{3mm} r_{i-1}-a+1 \dots r_{i-1}\hspace{3mm} r_i-a+1 \dots r_{i-1}-a \hspace{3mm} r_{i-1}+1 \dots n.\]
We now apply Quantum Pieri. Note that in Ciocan--Fontanine's notation in \cite{ciocanpieri},  $s^i_{\yng(1)}=\alpha_{1,\rho-i+1}$.

The classical terms are easily determined, so we just consider the quantum corrections. There are possible quantum corrections for each $h,l$ satisfying $\rho \geq h \geq i \geq l \geq 1$ such that
\[\omega(r_{h}) >\max\{\omega(r_h+1),\dots,\omega(r_{l-1})\}.\]
It's easy to see that only $h=l=i$ satisfies this condition. This contributes a term
$q_i [X_{\omega'}]$
where $\omega'=\omega s_{r_i} \cdots s_{r_{i-1}-1} s_{r_{i-1}} \cdots s_{r_{i+1}+1}$ provided this permutation lies in $\Sym(r_1,\dots,r_\rho)$ and has the correct number of transpositions. If $r_{i+1}=r_i-1$, then 
\[\omega'=1 \dots r_i-a \hspace{3mm}  r_{i-1}-a+1 \dots r_{i-1}-1 \hspace{3mm} r_i-a+1 \dots r_{i-1}-a \hspace{2mm} r_{i-1} \dots n. \]
It is easy to see that this is precisely the quantum correction prescribed in the lemma. More generally, if $r_{i+1}<r_i-a$ then $\omega'$ does not satisfy the conditions, and otherwise it is the permutation described by the partitions in third case of the lemma. 
\end{proof}

\section{Construction of the superpotential}\label{sec:construction}
\subsection{Construction}
Lemma \ref{lem:formula} allows us to mimic the Marsh--Rietsch construction of the Pl\U cker coordinate superpotential of the Grassmannian. For each $i$, consider for each of the $r_{i-1}$ elements of $M(r_{i-1},r_i)$ the expansion in the quantum Schubert basis 
\[F^i_\lambda:=s^i_{\yng(1)}*s^i_\lambda.\]
The expansion is described by Lemma \ref{lem:formula}; $F^i_\lambda$ is a sum of Schubert classes with coefficients in $\CC[q_1,\dots,q_\rho]$. 

The sum
\[\sum_{i=1}^\rho (\sum_{\lambda \in M(r_{i-1},r_i)} \frac{F^i_\lambda}{s^i_{\lambda}})-r_{i+1} s^i_{\yng(1)}.\]
is equivalent to $\sum_{i=1}^\rho (r_{i-1}-r_{i+1}) s^i_{\yng(1)}$ which is the anti-canonical class of the flag variety.

 The mirror of the flag variety should be a Laurent polynomial in the coordinates of an appropriate space: in the Grassmannian case, Marsh--Rietsch use the fact that $S(n,r)$ indexes both the basis of the cohomology of the Grassmannian and the Pl\U cker coordinates of dual Grassmannian. 

The set $S(n,\underline{r})$ naturally indexes elements of the coordinate ring of the product of Grassmannians $Y(n,\underline{r}):=\prod_{i=1}^\rho \Gr(r_{i-1},r_{i-1}-r_i)$.
\begin{rem}
Alternatively, we can consider these to be coordinates on the fibers of the flag variety viewed as a tower of Grassmannian bundles.
\end{rem}
Let $Q_i$ be the tautological quotient bundle pulled back to $Y(n,\underline{r})$ from the $i^{th}$ Grassmannian factor. Sections of $\det(Q_i)$ are indexed by $\lambda \in S(r_{i-1},r_i)$. We write $p^i_\lambda$ for the Pl\U cker coordinate associated to $i$ and $\lambda$. 

We denote $Y(n,\underline{r})^\circ:=\prod_{i=1}^\rho \Gr(r_{i-1},r_{i-1}-r_i)^\circ$ the locus in $Y(n,\underline{r})$ where $p^i_\lambda \neq 0$ for all $i$ and $\lambda \in M(r_{i-1},r_i)$. This is the complement of an anti-canonical divisor on $Y(n,\underline{r})$. 

To each Schubert class $s_{\underline{\mu}}$ we associate the product
\[p_{\underline{\mu}}:=\prod_{i=1}^\rho p^i_{\mu_i}.\]
We denote the polynomial in the coordinate ring of $Y(n,\underline{r})$ and the $q_1,\dots,q_\rho$ obtained by replacing the Schubert classes in $F^i_\lambda$ with Pl\U cker coordinates in this way as $G^i_\lambda$. 
\begin{mydef}\label{def:mirror} The Pl\U cker coordinate superpotential $W_P$ of the flag variety is
\[\sum_{i=1}^\rho (\sum_{\lambda \in M(r_{i-1},r_i)} \frac{G^i_\lambda}{p^i_{\lambda}})-r_{i+1} p^i_{\yng(1)}.\]
\end{mydef}
The superpotential of the flag variety $\Fl(n;r_1,\dots,r_\rho)$ can be viewed as a regular function $Y(n,\underline{r})^\circ \times \CC^\rho \to \CC$. 
\begin{rem} Note that after cancellations, all coefficients are positive in the expression of the Pl\U cker coordinate superpotential. The Pl\U cker coordinate superpotential is very close to the sum of the mirrors of each of the Grassmannian factors (which corresponds to the mirror of the product of Grassmannians) -- in particular the denominators are the same. The extra terms should be thought of keeping track of the non-trivial fiber structure of the flag variety.  
\end{rem}
\begin{eg} Consider the flag variety $\Fl(6;4,2,1)$. The Pl\U cker coordinate superpotential is
\begin{align*}
\frac{p^1_{\yng(1)}}{p^1_\emptyset}+\frac{p^1_{\yng(2,1,1,1)}}{p^1_{\yng(1,1,1,1)}}+\frac{p^1_{\yng(2,1)}}{p^1_{\yng(2)}}+\frac{p^1_{\yng(2,2,1)}+q_1 p^1_{\yng(1)}}{p^1_{\yng(2,2)}}+\frac{p^1_{\yng(2,2,2,1)}+q_1 p^1_{\yng(1,1)} p^2_{\yng(1)}}{p^1_{\yng(2,2,2)}}+\frac{q_1 p^1_{\yng(1,1,1)} p^2_{\yng(1,1)}}{p^1_{\yng(2,2,2,2)}}
\\+\frac{p^2_{\yng(1)}}{p^2_\emptyset}+\frac{p^2_{\yng(2,1)}}{p^2_{\yng(1,1)}}+\frac{p^2_{\yng(2,1)}+q_2}{p^2_{\yng(2)}}+\frac{q_2 p^2_{\yng(1)} p^3_{\yng(1)}}{p^2_{\yng(2,2)}}+\frac{p^3_{\yng(1)}}{p^3_{\emptyset}}+\frac{q_3}{p^3_{\yng(1)}}.
\end{align*}
\end{eg}

\subsection{Laurent polynomials from $W_P$}
The coordinate ring of the Grassmannian $\Gr(n,r)$ and the open subvariety $\Gr(n,r)^\circ$ has a cluster structure \cite{scott2006}. There are distinguished cluster charts $(\CC^*)^{r(n-r)} \to \Gr(n,r)^\circ$; these correspond to certain sets, called cluster seeds, of algebraically independent collections of Pl\U cker coordinates. All Pl\U cker coordinates can be expanded as Laurent polynomials (with positive coefficients) in the Pl\U cker coordinates of a given cluster seed. Every cluster seed contains the \emph{frozen variables}: these are Pl\U cker coordinates corresponding to $M(n,r)$. 
\begin{eg} The set $p_{\emptyset},p_{\yng(1)}, p_{\yng(1,1)}, p_{\yng(2)},p_{\yng(2,2)}$ is a cluster for $\Gr(4,2)$.
\end{eg}
This is an example of a seed with plays an important role in the cluster structure of the Grassmannian.
\begin{mydef}\label{def:rectangles}The \emph{rectangles cluster} is the cluster whose Pl\U cker coordinates are the rectangular partitions $i \times j$ in $S(n,r)$. 
\end{mydef}
Given any cluster seed, pulling the Pl\U cker coordinate superpotential $W_P$ of the Grassmannian back via the cluster chart results in a Laurent polynomial in the Pl\U cker coordinates of the cluster seed. The coefficients of this Laurent polynomial are always positive.  
\begin{eg} Pulling back the superpotential of $\Gr(4,2)$ via the rectangles cluster gives 
\[\frac{p_{\yng(1)}}{p_\emptyset}+\frac{p_{\yng(1,1)}}{p_{\yng(1)}}+\frac{p_{\yng(2)}}{p_{\yng(1)}}+\frac{p_{\yng(2,2)}}{p_{\yng(1)}p_{\yng(1,1)}}+\frac{p_{\yng(2,2)}}{p_{\yng(1)}p_{\yng(2)}}+\frac{q p_{\yng(1)}}{p_{\yng(2,2)}}.\]
\end{eg}

The superpotential of the flag variety is a regular function $Y(n,\underline{r})^\circ \times \CC^\rho \to \CC$. The denominators of $W_P$ are the frozen variables of the Grassmannian factors. Any choice of cluster seed for each Grassmannian factor gives a torus chart on $Y(n,\underline{r})^\circ$; pulling back gives a Laurent polynomial with positive coefficients in $\dim(\Fl(n;\underline{r}))$ variables (as in \cite{MarshRietsch}, we normalize by setting $p^i_\emptyset=1$). In particular, this gives many conjectural Laurent polynomial mirrors to the flag variety. 
\begin{mydef} We call the cluster chart on $Y(n,\underline{r})$ given by the rectangles cluster on each Grassmannian factor the \emph{rectangles cluster} for $Y(n,\underline{r})$. 
\end{mydef}
\section{Comparison with the Batyrev--Ciocan-Fontanine--Kim--van Straten mirror}\label{sec:comparison}
The aim of this section is to show that the Pl\U cker coordinate mirror of the flag variety -- whose structure was deduced from quantum Schubert calculus -- pulls back in the rectangles cluster chart to the Eguchi--Hori--Xiong (EHX) mirror \cite{eguchi,giventalflag,flagdegenerations}. This was shown for the Grassmannian in \cite{MarshRietsch}. The Newton polytope of the BCFKvS mirror is a toric degeneration of the flag variety. 

The BCFKvS mirror is defined via the ladder diagram. See for example \cite{flagdegenerations} for complete definition of the BCFKvS mirror; we give a rough description here. Fixing $\Fl(n;r_1,\dots,r_\rho)$, for each Grassmannian step draw an $r_i \times (r_{i-1}-r_i)$ grid of boxes, placing them together.  For example, the ladder diagram of $\Fl(5,3,2,1)$ is
\[\begin{tikzpicture}[scale=0.6]
\draw (0,0) rectangle (1,1);
\draw (1,1) rectangle (2,2);
\draw (0,1) rectangle (1,2);
\draw (1,0) rectangle (2,1);
\draw (2,1) rectangle (3,2);
\draw (0,2) rectangle (1,3);
\draw (1,2) rectangle (2,3);
\draw (2,0) rectangle (3,1);
\draw (3,0) rectangle (4,1);
\end{tikzpicture}.\]
Draw a vertex inside each box in the ladder diagram, as well as vertices in the corners along the northeast border. Add a vertex above the first column of boxes and to the right of the bottom row. In this example, the vertices are
\[\begin{tikzpicture}[scale=0.6]
\draw (0,0) rectangle (1,1);
\draw (1,1) rectangle (2,2);
\draw (0,1) rectangle (1,2);
\draw (1,0) rectangle (2,1);
\draw (2,1) rectangle (3,2);
\draw (0,2) rectangle (1,3);
\draw (1,2) rectangle (2,3);
\draw (2,0) rectangle (3,1);
\draw (3,0) rectangle (4,1);
\draw[fill] (0.5,0.5) circle (2pt);
\draw[fill] (1.5,0.5) circle (2pt);
\draw[fill] (2.5,0.5) circle (2pt);
\draw[fill] (3.5,0.5) circle (2pt);
\draw[fill] (0.5,1.5) circle (2pt);
\draw[fill] (0.5,2.5) circle (2pt);
\draw[fill] (0.5,3.5) circle (2pt);
\draw[fill] (4.5,0.5) circle (2pt);
\draw[fill] (1.5,1.5) circle (2pt);
\draw[fill] (1.5,2.5) circle (2pt);
\draw[fill] (2.5,1.5) circle (2pt);
\draw[fill] (2.5,2.5) circle (2pt);
\draw[fill] (3.5,1.5) circle (2pt);
\end{tikzpicture}.\]
The final step is to add arrows: these are added between all vertices downwards and to the right:
\[\begin{tikzpicture}[scale=0.6]
\draw[gray] (0,0) rectangle (1,1);
\draw[gray] (1,1) rectangle (2,2);
\draw[gray] (0,1) rectangle (1,2);
\draw[gray] (1,0) rectangle (2,1);
\draw[gray] (2,1) rectangle (3,2);
\draw[gray] (0,2) rectangle (1,3);
\draw[gray] (1,2) rectangle (2,3);
\draw[gray] (2,0) rectangle (3,1);
\draw[gray] (3,0) rectangle (4,1);
\node[circle,fill,scale=0.3] at (0.5,0.5) (a) {};
\node[circle,fill,scale=0.3] at (1.5,0.5) (b) {};
\node[circle,fill,scale=0.3] at (2.5,0.5) (c) {};
\node[circle,fill,scale=0.3] at (3.5,0.5) (d) {};
\node[circle,fill,scale=0.3] at (0.5,1.5) (e) {};
\node[circle,fill,scale=0.3] at (0.5,2.5) (f) {};
\node[circle,fill,scale=0.3] at (0.5,3.5) (g) {};
\node[circle,fill,scale=0.3] at (4.5,0.5) (h) {};
\node[circle,fill,scale=0.3] at (1.5,1.5) (i) {};
\node[circle,fill,scale=0.3] at (1.5,2.5) (j) {};
\node[circle,fill,scale=0.3] at (2.5,1.5) (k) {};
\node[circle,fill,scale=0.3] at (2.5,2.5) (l) {};
\node[circle,fill,scale=0.3] at (3.5,1.5) (m) {};
\draw[<-] (h)--(d);
\draw[<-] (d)--(c);
\draw[<-] (c)--(b);
\draw[<-] (b)--(a);
\draw[<-] (m)--(k);
\draw[<-] (k)--(i);
\draw[<-] (i)--(e);
\draw[<-] (l)--(j);
\draw[<-] (j)--(f);

\draw[->] (e)--(a);
\draw[->] (f)--(e);
\draw[->] (g)--(f);
\draw[->] (i)--(b);
\draw[->] (j)--(i);
\draw[->] (k)--(c);
\draw[->] (l)--(k);
\draw[->] (m)--(d);
\end{tikzpicture}.\]
Assign to each of the internal vertices a variable $z_v$. To the $\rho+1$ external vertices, assign $q_0:=1,q_1,\dots,q_\rho$ from upper left to bottom right. The BCFKvS mirror is
\[\sum_{a} \frac{z_{h(a)}}{z_{t(a)}}\]
where the sum is over the arrows of the quiver.

\begin{prop}[\cite{MarshRietsch}]\label{prop:grassmannian}
The BCFKvS mirror of the Grassmannian is isomorphic to the pullback of the Pl\U cker coordinate mirror along the rectangles cluster chart.
\end{prop}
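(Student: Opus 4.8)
The plan is to write both superpotentials as explicit Laurent polynomials on a torus of dimension $r(n-r)$, exhibit a monomial change of coordinates carrying one to the other, and reduce the remaining verification to two Plücker-coordinate identities. Specializing Definition~\ref{def:mirror} to $\rho=1$, $\Fl(n;r)=\Gr(n,r)$, the correction term $-r_{i+1}p^i_{\yng(1)}$ disappears, and a short computation with Lemma~\ref{lem:formula} --- only cases (1), (2), (4) occur, the mixed term $s^{0,1}_{\cdot,\cdot}$ of case (2) vanishes, and in case (4) the mixed term collapses to a single quantum contribution $q\,s^1_{(r-1)\times(n-r-1)}$ --- gives, writing $p_{k\times\ell}$ for the Plücker coordinate of the $k\times\ell$ rectangle,
\[
W_P \;=\; \frac{p_{1\times 1}}{p_\emptyset} \;+\; \sum_{a=1}^{n-r-1}\frac{p_{(a+1,\,a^{r-1})}}{p_{r\times a}} \;+\; \sum_{a=1}^{r-1}\frac{p_{((n-r)^a,\,1)}}{p_{a\times(n-r)}} \;+\; \frac{q\,p_{(r-1)\times(n-r-1)}}{p_{r\times(n-r)}}.
\]
On the other side, the ladder diagram of $\Gr(n,r)$ is a single $r\times(n-r)$ grid of boxes; labelling the internal vertices $z_{i,j}$ with $1\le i\le r$ numbering rows from the top and $1\le j\le n-r$ numbering columns from the left, so that arrows run $z_{i,j}\to z_{i,j+1}$ and $z_{i,j}\to z_{i+1,j}$, putting $q_0=1$ on the external vertex above the first column (feeding $z_{1,1}$) and $q_1=q$ on the one to the right of the bottom row (fed by $z_{r,n-r}$), one reads off
\[
W_{\mathrm{BCFKvS}} \;=\; z_{1,1} \;+\; \sum_{i=1}^{r}\sum_{j=1}^{n-r-1}\frac{z_{i,j+1}}{z_{i,j}} \;+\; \sum_{i=1}^{r-1}\sum_{j=1}^{n-r}\frac{z_{i+1,j}}{z_{i,j}} \;+\; \frac{q}{z_{r,n-r}}.
\]

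Next I would introduce the monomial map $\phi$ from the rectangles-cluster torus to the $z$-torus given by $z_{i,j}=p_{i\times j}/p_{(i-1)\times(j-1)}$ (with the convention $p_{0\times\ell}=p_{k\times0}=p_\emptyset=1$) and fixing $q$; equivalently $p_{i\times j}=z_{i,j}\,z_{i-1,j-1}\,z_{i-2,j-2}\cdots$, the product running down the diagonal until it leaves the grid. Since $z_{i,j}$ is $p_{i\times j}$ times a monomial in the $p_{i'\times j'}$ with $i'<i$ and $j'<j$, the exponent matrix of $\phi$ is unipotent and triangular, so $\phi$ is an isomorphism of the two coordinate tori, each of dimension $r(n-r)=\dim\Gr(n,r)$ after the normalization $p_\emptyset=1$. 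It then remains to check that $\phi^*W_{\mathrm{BCFKvS}}$ coincides with the expansion of $W_P$ in the rectangles cluster chart.

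Under $\phi$ the source-vertex term $z_{1,1}$ becomes $p_{1\times1}$, matching the $\lambda=\emptyset$ term of $W_P$, and the sink term $q/z_{r,n-r}$ becomes $q\,p_{(r-1)\times(n-r-1)}/p_{r\times(n-r)}$, matching the maximal-rectangle term. What remains is to match the $a$th column of rightward arrows, contributing $\sum_{i=1}^{r}z_{i,a+1}/z_{i,a}$, with the $\lambda=r\times a$ term of $W_P$, and the $a$th row of downward arrows, contributing $\sum_{j=1}^{n-r}z_{a+1,j}/z_{a,j}$, with the $\lambda=a\times(n-r)$ term; this is the pair of identities, read in the rectangles cluster chart,
\[
\frac{p_{(a+1,\,a^{r-1})}}{p_{r\times a}}=\sum_{i=1}^{r}\frac{p_{i\times(a+1)}\,p_{(i-1)\times(a-1)}}{p_{i\times a}\,p_{(i-1)\times a}},
\qquad
\frac{p_{((n-r)^a,\,1)}}{p_{a\times(n-r)}}=\sum_{j=1}^{n-r}\frac{p_{(a+1)\times j}\,p_{(a-1)\times(j-1)}}{p_{a\times j}\,p_{a\times(j-1)}}.
\]
These are exchanged by the isomorphism $\Gr(n,r)\cong\Gr(n,n-r)$, which transposes partitions and hence rectangles, so it suffices to prove one of them.

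The only non-formal step, and the one I expect to be the main obstacle, is establishing these Laurent expansions. Since $(a+1,a^{r-1})$ and $((n-r)^a,1)$ differ from a rectangle by a single box, one route is to feed them into the flow/network (non-crossing lattice path) description of Plücker coordinates attached to the rectangles cluster: the admissible path families are then few enough that the expansion collapses to exactly the $r$- (respectively $(n-r)$-) term sum above. Alternatively one can induct on $r$ (respectively on $n-r$), peeling one row off the distinguished shape and applying the three-term (short) Plücker relations, which telescope so that the partial sums recover the intermediate coordinates. A secondary source of friction is the bookkeeping of conventions --- rows numbered from the top or the bottom, ``$k\times\ell$'' meaning $k$ parts of size $\ell$, and which of $q_0,q_1$ sits on which external vertex --- since the exact form of $\phi$ depends on them; the cases $\Gr(4,2)$, $\Gr(5,2)$ and $\Gr(5,3)$ suffice to pin these choices down and to verify the two identities in low rank.
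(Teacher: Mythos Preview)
Your proposal is correct and follows essentially the same approach as the paper's sketch: the identical monomial change of coordinates $z_{i,j}=p_{i\times j}/p_{(i-1)\times(j-1)}$, followed by simplification via Pl\"ucker relations. You are more explicit than the paper---writing out the two column/row identities, noting they are exchanged by Grassmannian duality, and checking that $\phi$ is a torus automorphism via triangularity---but these are elaborations of the same argument rather than a different route.
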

\begin{proof}[Sketch]
Because this is the key to the main result of this section, we briefly sketch the proof. The Grassmannian $\Gr(n,r)$ is a one step flag variety; there is a single $r \times (n-r)$ grid of blocks in the ladder diagram. To the vertex in the $i^{th}$ row and $j^{th}$ column, assign $\frac{p_{i \times j}}{p_{i-1 \times j-1}}$, where as before $i \times j$ denotes the rectangular partition with $i$ rows and $j$ columns. This describes the change of coordinates inducing the isomorphism between the BCFKvS mirror and the Pl\U cker coordinate mirror in the rectangles cluster chart. In particular, after setting $z_v:=\frac{p_{i \times j}}{p_{i-1 \times j-1}}$, the BCFKvS mirror can be simplified using Pl\U cker relations until it matches $W_P$.
\end{proof}
\begin{eg}
The ladder diagram for $\Gr(4,2)$ is
\[\begin{tikzpicture}[scale=1.5]
\draw[gray] (0,0) rectangle (1,1);
\draw[gray] (1,1) rectangle (2,2);
\draw[gray] (0,1) rectangle (1,2);
\draw[gray] (1,0) rectangle (2,1);
\node at (0.5,1.5) (A) {$\frac{p_{\yng(1)}}{p_\emptyset}$};
\node at (0.5,0.5) (B) {$\frac{p_{\yng(1,1)}}{p_\emptyset}$};
\node at (1.5,1.5) (C) {$\frac{p_{\yng(2)}}{p_\emptyset}$};
\node at (1.5,0.5) (D) {$\frac{p_{\yng(2,2)}}{p_{\yng(1)}}$};
\node at (0.5,2.5) (E) {$1$};
\node at (2.5,0.5) (F) {$q$};
\draw[->] (A)--(B);
\draw[->] (B)--(D);
\draw[->] (A)--(C);
\draw[->] (C)--(D);
\draw[->] (E)--(A);
\draw[->] (D)--(F);
\end{tikzpicture}.\]
The contributions to the superpotential from the two middle horizontal arrows are
\[\frac{p_{\yng(2)}}{p_{\yng(1)}}+\frac{p_{\yng(2,2)}p_\emptyset}{p_{\yng(1)}p_{\yng(1,1)}}=\frac{p_{\yng(2)}p_{\yng(1,1)}+p_{\yng(2,2)}p_\emptyset}{p_{\yng(1)} p_{\yng(1,1)}}=\frac{p_{\yng(2,1)}}{p_{\yng(1,1)}},\]
where the last equality is by the Pl\U cker relation
\[p_{\yng(2)}p_{\yng(1,1)}+p_{\yng(2,2)}p_\emptyset = p_{\yng(2,1)}p_{\yng(1)}. \]
Similarly, summing up the other arrows results in $W_P$. 
\end{eg}
We now prove the main theorem, relating the Pl\U cker coordinate superpotential and the BCFKvS mirror. We normalize by setting $p^i_\emptyset=1$. As in the Grassmannian case, we define the isomorphism of tori by giving a new label for each vertex of the ladder diagram. Fix a flag variety $\Fl(n;r_1,\dots,r_\rho)$. The ladder diagram is made up of the ladder diagrams of $\rho$ Grassmannians, i.e. an $r_i \times (r_{i-1} - r_i)$ grid for each $i$. Label the vertices (both internal and external) of the block corresponding to $\Gr(r_{i-1},r_i)$ with the Pl\U cker coordinates of the $i^{th}$ factor of $Y(n,\underline{r})=\prod_{i=1}^\rho \Gr(r_{i-1},r_{i-1}-r_i)$ just as prescribed in the Grassmannian case, but scale all labels by $q_1 \cdots q_{i-1}$. For example, for $\Fl(5;3,2,1)$ the labels are
\[\begin{tikzpicture}[scale=1.8]
\draw[gray] (0,0) rectangle (1,1);
\draw[gray] (1,1) rectangle (2,2);
\draw[gray] (0,1) rectangle (1,2);
\draw[gray] (1,0) rectangle (2,1);
\draw[gray] (2,1) rectangle (3,2);
\draw[gray] (0,2) rectangle (1,3);
\draw[gray] (1,2) rectangle (2,3);
\draw[gray] (2,0) rectangle (3,1);
\draw[gray] (3,0) rectangle (4,1);
\node at (0.5,0.5) (a) {$\frac{p^1_{\yng(1,1,1)}}{p^1_\emptyset}$};
\node at (1.5,0.5) (b) {$\frac{p^1_{\yng(2,2,2)}}{p^1_{\yng(1,1)}}$};
\node at (2.5,0.5) (c) {$\frac{q_1  p^2_{\yng(1,1)}}{p^2_\emptyset}$};
\node at (3.5,0.5) (d) {$ \frac{q_1 q_2 p^3_{\yng(1)}}{p^3_\emptyset}$};
\node at (0.5,1.5) (e) {$\frac{p^1_{\yng(1,1)}}{p^1_\emptyset}$};
\node at (0.5,2.5) (f) {$\frac{p^1_{\yng(1)}}{p^1_\emptyset}$};
\node at (0.5,3.5) (g) {1};
\node at (4.5,0.5) (h) {$q_1 q_2 q_3$};
\node at (1.5,1.5) (i) {$\frac{p^1_{\yng(2,2)}}{p^1_{\yng(1)}}$};
\node at (1.5,2.5) (j) {$\frac{p^1_{\yng(2)}}{p^1_\emptyset}$};
\node at (2.5,1.5) (k) {$\frac{q_1  p^2_{\yng(1)}}{p^2_\emptyset}$};
\node at (2.5,2.5) (l) {$q_1$};
\node at (3.5,1.5) (m) {$q_1 q_2$};
\draw[<-] (h)--(d);
\draw[<-] (d)--(c);
\draw[<-] (c)--(b);
\draw[<-] (b)--(a);
\draw[<-] (m)--(k);
\draw[<-] (k)--(i);
\draw[<-] (i)--(e);
\draw[<-] (l)--(j);
\draw[<-] (j)--(f);

\draw[->] (e)--(a);
\draw[->] (f)--(e);
\draw[->] (g)--(f);
\draw[->] (i)--(b);
\draw[->] (j)--(i);
\draw[->] (k)--(c);
\draw[->] (l)--(k);
\draw[->] (m)--(d);
\end{tikzpicture}.\]
Viewing the variables of the BCFKvS mirror and the rectangular Pl\U cker coordinates on $Y(n; \underline{r})$ as coordinates on the torus $(\CC^*)^{\dim(\Fl(n;r_1,\dots,r_\rho))}$, the labeling above gives an automorphism 
\[\phi: (\CC^*)^{\dim(\Fl(n;r_1,\dots,r_\rho))} \to (\CC^*)^{\dim(\Fl(n;r_1,\dots,r_\rho))}.\]
\begin{thm} Let $\Fl(n;r_1\dots,r_\rho)$ be a flag variety, and let $W_{T}$ denote the Batyrev--Ciocan-Fontanine--Kim--van Straten mirror (T standing for toric variety). Let $W_R$ denote the pull-back of the Pl\U cker coordinate superpotential along the rectangles chart. Then
\[\phi^*(W_T)=W_R.\]
\end{thm}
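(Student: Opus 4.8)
The plan is to reduce the theorem to a purely local, block-by-block computation, exploiting the fact that both the ladder diagram of $\Fl(n;r_1,\dots,r_\rho)$ and the space $Y(n,\underline{r})$ decompose as a union of $\rho$ Grassmannian pieces. First I would group the arrows of the ladder quiver into three types: (a) arrows lying entirely inside the $i^{\text{th}}$ block (between two internal vertices of that block, or between an internal vertex and an external corner vertex of that block), (b) the arrows from the $(\rho+1)$ external vertices into the diagram, and (c) the arrows that cross from one block to the adjacent block (equivalently, the arrows whose head or tail is one of the external vertices of an internal block). For arrows of type (a), after substituting $z_v = q_1\cdots q_{i-1}\cdot \frac{p^i_{a\times b}}{p^i_{(a-1)\times(b-1)}}$ the overall scaling factor $q_1\cdots q_{i-1}$ cancels in the ratio $z_{h(a)}/z_{t(a)}$, so Proposition~\ref{prop:grassmannian} applies verbatim inside each block: the sum of those arrows, after applying the Plücker relations on the $i^{\text{th}}$ Grassmannian, collapses to exactly the "classical" part of the $i^{\text{th}}$ summand of $W_P$, namely the terms $\frac{s^i_{\{a+1,a,\dots,a\}}}{s^i_{r_i\times a}}$ and $\frac{s^i_{\{r_{i-1}-r_i,\dots,r_{i-1}-r_i,1\}}}{s^i_{a\times(r_{i-1}-r_i)}}$ read off from Lemma~\ref{lem:formula}(1)--(2) with Plücker coordinates in place of Schubert classes.

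Next I would handle the crossing and external arrows, which is where the quantum corrections and the parameters $q_i$ enter. The key bookkeeping point is that the scaling convention makes the external vertex that BCFKvS label $q_i$ coincide, under $\phi$, with the value $q_1\cdots q_i$ (this is visible in the $\Fl(5;3,2,1)$ picture: the northeast corner of the first block carries label $q_1$, etc.), and more importantly the "shared" vertices on the boundary between block $i$ and block $i+1$ get the label $q_1\cdots q_i \cdot \frac{p^{i+1}_{\lambda}}{p^{i+1}_{\mu}}$ when viewed from block $i+1$, but are external (corner) vertices of block $i$ carrying pure monomials $q_1\cdots q_i$ times a rectangular Plücker coordinate of block $i$. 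I would check that an arrow of type (c) running from an internal vertex $v$ of block $i$ (near its top row/last column) down into block $i+1$ contributes a ratio whose denominator is a frozen variable $p^i_{\lambda}$, $\lambda\in M(r_{i-1},r_i)$, and whose numerator, after Plücker simplification within block $i+1$, is exactly $q_i$ times the product $s^{i,i+1}_{\dots}$ appearing in the quantum correction of Lemma~\ref{lem:formula}(3)--(4) — the factor $q_i = (q_1\cdots q_i)/(q_1\cdots q_{i-1})$ being produced by the mismatch in scaling between the two adjacent blocks. The single arrow into the bottom-right external vertex similarly yields the term $q_\rho/p^\rho_{\yng(1)}$, which is the final quantum correction with $s^{\rho,\rho+1}=1$ by the convention $s^{i,\rho+1}_{\emptyset,\lambda'}=s^i_{\lambda'}$.

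The third ingredient is the accounting of multiplicities: in $W_P$ the class $s^i_{\yng(1)}$ is subtracted $r_{i+1}$ times, and one must see that $W_R = \phi^*(W_T)$ reproduces this. I would argue that in $\sum_\lambda F^i_\lambda/s^i_\lambda$ the term $s^{i-1,i}_{\yng(1),\lambda}/s^i_\lambda$ appears for each of the $r_i$ tall rectangles and each of the $r_{i-1}-r_i$ wide rectangles in $M(r_{i-1},r_i)$ except when it is absorbed, and after the Plücker simplifications these collapse to copies of $p^{i-1}_{\yng(1)}$ (equivalently the vertical "spine" arrows of the ladder going up the first column of block $i$), and the net coefficient of each $p^i_{\yng(1)}$ works out to $r_{i-1}-r_{i+1}$ before the correction and $r_{i-1}-r_{i+1}-(r_{i-1}-r_{i+1})=\dots$; concretely I would verify the arithmetic $\#\{\text{arrows of the ladder at a given spine vertex}\}$ matches the coefficient in Definition~\ref{def:mirror} after the $-r_{i+1}s^i_{\yng(1)}$ correction, using that each internal column-1 vertex of block $i$ has exactly one incoming and one outgoing vertical arrow plus the horizontal arrows feeding the crossing terms. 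This is the step I expect to be the main obstacle: keeping the $q$-scaling, the shared-boundary-vertex identification, and the combinatorial multiplicity count all consistent simultaneously, since an off-by-one in which vertices are "internal to block $i$" versus "external/shared" would corrupt both the quantum corrections and the coefficient of the hyperplane class. Once the block-local Grassmannian computation (type (a), from Proposition~\ref{prop:grassmannian}) and the crossing-arrow computation (types (b),(c), matched against Lemma~\ref{lem:formula}(3)--(4)) are both in hand, assembling them into $\phi^*(W_T)=W_R$ is a term-by-term comparison with Definition~\ref{def:mirror}. I would organize the write-up as: (i) set up $\phi$ and the scaling lemma; (ii) dispatch type-(a) arrows via Proposition~\ref{prop:grassmannian}; (iii) compute the crossing/external contributions and match them to the quantum terms of $F^i_\lambda$; (iv) tally the hyperplane-class coefficients; (v) conclude.
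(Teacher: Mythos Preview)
Your approach is essentially the paper's: reduce to the Grassmannian case inside each block via Proposition~\ref{prop:grassmannian} (your type-(a) arrows, with the $q_1\cdots q_{i-1}$ scaling cancelling in every ratio), and match the crossing arrows to the quantum corrections of Lemma~\ref{lem:formula}(3)--(4), the factor $q_i$ arising exactly as you say from the scaling mismatch between adjacent blocks. The paper organizes this by introducing an intermediate object $W_F$, the sum of the factorwise Grassmannian Pl\"ucker mirrors on $Y(n,\underline r)$, and checking that both $\phi^*(W_T)$ and $W_P$ differ from $W_F$ by the same terms; this is equivalent to your (ii)--(iii) but slightly cleaner bookkeeping.

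Your step~(iv), however, is muddled, and the confusion is substantive rather than cosmetic. The term $s^{i-1,i}_{\yng(1),\lambda}$ appears in $F^i_\lambda$ only for the $r_i$ maximally \emph{wide} rectangles $\lambda=a\times(r_{i-1}-r_i)$ with $1\le a\le r_i$ (cases (2)--(4) of Lemma~\ref{lem:formula}), not for the tall ones. Since $p^{i-1,i}_{\yng(1),\lambda}/p^i_\lambda=p^{i-1}_{\yng(1)}$, the $i^{\text{th}}$ summand of $W_P$ contributes exactly $r_i$ copies of $p^{i-1}_{\yng(1)}$; shifting the index, block $i{+}1$ contributes $r_{i+1}$ copies of $p^i_{\yng(1)}$, and these cancel \emph{on the nose} against the explicit subtraction $-r_{i+1}p^i_{\yng(1)}$ in Definition~\ref{def:mirror}. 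There are no corresponding arrows in the ladder diagram at all --- your attempt to locate them as ``vertical spine arrows of the first column of block $i$'' in $W_T$ is incorrect (those arrows contribute $p^i_{a\times 1}/p^i_{(a-1)\times 1}$, not $p^{i-1}_{\yng(1)}$), and this misidentification is what produces your garbled coefficient arithmetic. Once you see that these terms simply vanish from $W_P$ and were never present in $\phi^*(W_T)$, your ``main obstacle'' becomes a one-line count rather than a delicate matching.
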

\begin{proof}
By Proposition \ref{prop:grassmannian}, by using Pl\U cker relations, $\phi^*(W_T)$ is almost the sum of the Pl\U cker coordinate mirrors of each Grassmannian fiber in $\Fl(n;\underline{r})$. This is also true of the Pl\U cker coordinate mirror of the flag variety. We just need to compare the discrepancies. Call the rational function on $Y(n,\underline{r})$ that is the sum of the Pl\U cker coordinates of each Grassmannian fiber $W_F$. 

Looking first at $\phi^*(W_T)$, the only arrows that do not induce a summand that appears in $W_F$ are the horizontal arrows that cross from one block to another. The other arrows give all of the summands of $W_F$, except for the $\rho$ that involve the quantum parameters. Consider the arrows in the ladder diagram going from the $i^{th}$ block to the $i+1^{th}$, $i<\rho$. For $r_{i-1} \geq j \geq r_{i-1}-r_i$, we have an arrow giving a summand
\begin{equation} \label{eq:extra}
q_{i}\frac{p^{i}_{(j-1) \times (r_{i}-r_{i+1}-1)} p^i_{(j-(r_{i}-r_{i+1}))^t}}{p^{i}_{j \times (r_{i}-r_{i+1})}}.\end{equation}
Now consider the Pl\U cker coordinate mirror, as determined by Lemma \ref{lem:formula}. For each $\lambda \in M(r_{i-1},r_i)$, we take the equation $p^i_{\yng(1)}*p^i_\lambda=G^i_\lambda$, solve for $p^i_{\yng(1)}$, and sum the results, then subtract off $r_{i+1}$ copies of $p^i_{\yng(1)}$.  If $\lambda$ falls into cases 2, 3, or 4 in the lemma, the term in $G^i_\lambda/p^i_{\yng(1)}$ involving Pl\U cker coordinates from the $i-1^{th}$ factor cancels with subtracted terms. The terms in cases 1, 2, 3, and 4 that do not involve in the quantum parameters all appear in $W_F$: again, the only summands in $W_F$ we miss are those involving the quantum parameter. To prove the theorem it suffices to compare the extra terms with the summands in $\phi^*(W_T)$ described in \eqref{eq:extra}. But this immediate from looking at the formula for $G^i_\lambda$. 
\end{proof}
\section{Examples}\label{sec:examples}
We have compared the Pl\U cker coordinate mirror with the Batyrev--Ciocan-Fontanine--Kim--van Straten mirror. Rietsch \cite{rietsch2006} carefully describes the relation between the BCFKvS mirror and the Lie theoretic mirror, which can be combined with the above result to relate indirectly $W_P$ and the Lie theoretic construction. We don't explore this here, but instead, discuss in some examples the relation between the Gu--Sharpe mirror and the Pl\U cker coordinate mirror. In the Grassmannian case, we describe the isomorphism identifying the critical loci of the two mirrors. We also discuss the family $\Fl(n;2,1)$. To do this, we need the description of the quantum cohomology ring of the flag variety given by the Abelian/non-Abelian correspondence. 

\subsection{The Abelian/non-Abelian correspondence and the quantum cohomology ring of a flag variety} The quantum cohomology of any type A flag variety can be written as 
\[\CC[x_{ij},q_i: 1 \leq i \leq \rho, 1 \leq j \leq r_i]^{\Sym_{r_1} \times \cdots \times \Sym_{r_\rho}}/I.\] 
Recall that $\Sym_{r_i}$ acts on the polynomial ring by permuting the $x_{i1},\dots,x_{i r_i}$.  By convention, we set $x_{0j}=0$. The ideal $I$ can be described via the Abelian/non-Abelian correspondence: as shown in \cite{gukalashnikov}, one can describe the relations in the quantum cohomology ring by considering the relations in the quantum cohomology ring of a Fano toric variety instead. This is the toric variety obtained by Abelianization.  Let $\omega:=\prod_{i=1}^\rho \prod_{j < k} (x_{ij}-x_{ik})$, and set $I^{ab}$ to be the ideal in  $\CC[x_{ij},q_i: 1 \leq i \leq \rho, 1 \leq j \leq r_i]$ generated by the relations, for $i=1,\dots,\rho$, $j=1,\dots,r_i$
\[\prod_{k=1}^{r_{i-1}} (x_{ij}-x_{ik})=(-1)^{r_i-1}q_i \prod_{k=1}^{r_{i+1}} (x_{ik}-x_{ij}).\]
 Then the statement is that $f$ is in $I$ if and only if $\omega f \in I^{ab}$. In \cite{gukalashnikov}, we show how to use this ideal to compute structure constants of the quantum cohomology ring of the flag variety for a particular basis (not the Schubert basis). 
 
 This is the natural basis to consider from the perspective of the A/NA correspondence. To describe it, let $S^i_\lambda:=[s_\lambda(x_{i1},\dots,x_{i r_i})]$ be the class  in $QH(\Fl(n;\underline{r}))$ of the Schur polynomial indexed by a partition $\lambda$ in the $i^{th}$ set of variables. The basis is the set
 \[\{S^1_{\mu_1} \cdots S^\rho_{\mu_\rho}: \underline{\mu} \in S(n;\underline{r})\}.\]
Computations in this basis are done by first multiplying using the Littlewood--Richardson rules, and reducing any partitions which are too wide using rim--hook removals: see \cite[Example 3.8]{gukalashnikov} for the exact formula. 

The Gu--Sharpe superpotential \cite{gusharpe} is a Laurent polynomial in the $x_{ij}$ -- it is obtained taking the superpotential of the Abelianization of the flag variety and specializing the quantum parameters. We do not need the exact formula for the superpotential, just its critical locus.  The critical locus  of the Gu--Sharpe mirror is cut out by $I^{ab}$, with the additional restriction that for all $j \neq k$, $x_{ij} \neq x_{ik}$. This is a discrete set of points, on which there is a free $\Sym_{r_1} \times \cdots \times \Sym_{r_\rho}$ action. We consider the critical locus up to this action, which exactly describes the quantum cohomology ring of the flag variety. In the family of examples we consider, we show that $W_P$ has the correct critical locus by showing a bijective map between the two critical loci. Let us first consider the simpler case of the Grassmannian. 
\begin{eg}\label{eg:grassmanniancrit} Consider the one-step flag variety $\Gr(n,r)$. The critical locus of the Gu--Sharpe superpotential is described by 
\[x_1^n= \cdots =x_r^n=(-1)^r q.\]
In \cite{karp}, Karp shows that the critical locus of the Pl\U cker coordinate mirror is the $\binom{n}{r}$ fixed points of a q-deformation of the cyclic shift map (the author thanks Lauren Williams for pointing this paper out to her). He also shows that these fixed points are points of the Grassmannian corresponding to the matrices 
\[\begin{bmatrix}
1 & x_1 & x_1^2 & \cdots & x_1^{n-1}\\
1 & x_2 & x_2^2 & \cdots & x_2^{n-1}\\
\vdots& \vdots & & \vdots\\
1 & x_r & x_r^2 & \cdots & x_r^{n-1}\\
\end{bmatrix}\]
 where $x_1^n= \cdots=x_r^n=(-1)^r q$ and (so that this describes a point of the Grassmannian) $x_i \neq x_j$. After quotienting by the $S_r$ action, this gives an identification between the critical loci of the two superpotentials. Equivalently, we can describe the fixed locus of $W_P$ as the points given by
 \[p_\lambda=s_\lambda(x_1,\dots,x_r), \; x_1^n= \cdots =x_r^n=(-1)^r q.\]
\end{eg}
\begin{eg}[$\Fl(n;2,1)$]
We look at the family of examples $F_n:=\Fl(n;2,1)$. Let $Y_n:=Y(n;2,1)$. The critical locus of the Gu--Sharpe mirror for $F_n$ is given by the equations
\[x_{11}^n=-q_1 (x_{21}-x_{11}), \; x_{12}^n=-q_1 (x_{21}-x_{12}), \; (x_{21}-x_{11}) (x_{21}-x_{11})=q_2,\; x_{11} \neq x_{12}.\]
For fixed $q_1,q_2$, this gives $2 \binom{n}{2}$ critical points, up to the $\Sym_2$ action permuting $x_{11}$ and $x_{12}$. Call this set $C$. As in the Grassmannian case, using the Pl\U cker embeddings of the factors of $Y_n$ allows us to specify points of $Y$ by specifying the values of the Pl\U cker coordinates. Let 
\[C_P:=\{p^1_\lambda=s_\lambda(x_{11},x_{12}), p^2_{\yng(1)}=q_2/s_{\yng(1)}(x_{21})=q_2/x_{21}\}.\]
This is well-defined when $q_1^2 \neq q_2^{n-1}$ and $q_1 q_2 \neq 0$. When $q_1^2 = q_2^{n-1}$, the number of critical points of the Pl\U cker coordinate mirror can drop (for example, when $q_1=q_2=1$ for $\Fl(4;2,1)$, the critical locus has only 11 points).  Note that $C_P \subset Y_n^\circ$ and $|C_P|=2 \binom{n}{2}$.

So suppose $q_1^2 \neq q_2^{n-1}, q_1 q_2 \neq 0$. We expect that $C_P$ is the critical locus of $W_P$, which we have verified in small examples. We now check that the intersection of $C_P$ with the rectangles torus lies in the critical locus of $W_R$ (the pullback of $W_P$ to this chart). When $C_P$ is contained in this torus, this completely describes the critical locus of $W_P$. What we need to verify is that, for every rectangular $\lambda$, the equation
\[\frac{\partial}{\partial p^i_\lambda}W_R=0\]
gives a valid quantum cohomology relation, setting $p^1_\lambda=S^1_\lambda$, $p^2_{\yng(1)}=q_2/S^2_{\yng(1)}$. We can do this explicitly in this simple case. For example, we can compute that
\[p^2_{\yng(1)} \frac{\partial}{\partial p^2_{\yng(1)}}W_R=\frac{q_1 p^1_{(n-3)} p^2_{\yng(1)}}{p^1_{(n-2,n-2)}}+p^2_{\yng(1)}-\frac{q_2}{p^2_{\yng(1)}}.\]
We thus need to check the quantum cohomology ring relation
\[\frac{q_1 q_2 S^1_{(n-3)}}{S^1_{(n-2,n-2)} S^2_{\yng(1)}}-S^2_{\yng(1)}+\frac{q_2}{S^2_{\yng(1)}}=0.\]
That is, we want to check that
\begin{equation}\label{eq:goal} q_1 q_2 S^1_{(n-3)}-(S^2_{\yng(1)})^2 S^1_{(n-2,n-2)}+q_2 S^1_{(n-2,n-2)}=0.\end{equation}
Using the rim-hook removal rule from \cite{gukalashnikov}, one can compute that
\[(S^2_{\yng(1)})^2=q_2+ S^1_{\yng(1)}  S^2_{\yng(1)}- S^1_{\yng(1,1)}.\] 
Using Littlewood-Richardson rules,
\[(S^2_{\yng(1)})^2 S^1_{(n-2,n-2)}=q_2 S^1_{(n-2,n-2)}+S^1_{(n-1,n-2)}  S^2_{\yng(1)}-S^1_{(n-1,n-1)}.\]
Equation \eqref{eq:goal} then follows from applying the rim-hook removal to compute
\[S^1_{(n-1,n-2)}  S^2_{\yng(1)}=q_1 S^2_{\yng(1)}(S^1_{(n-3)} S^2_{\yng(1)}- S^1_{(n-3,1)})=q_1(q_2 S^1_{(n-3)}+S^1_{(n-2)}S^2_{\yng(1)}-S^1_{(n-2,1)}),\]
\[S^1_{(n-1,n-1)}=q_1(S^1_{(n-2)} S^2_{\yng(1)}- S^1_{(n-2,1)}).\]
To finish the proof of the proposition, one must also explicitly check 
\[\frac{\partial}{\partial p^1_{(n-2)}}W_R=\frac{\partial}{\partial p^1_{(n-2,n-2)}}W_R=\frac{\partial}{\partial p^1_{(n-3)}}W_R=0,\]
which is done in the same way as above. The remaining rectangular partitions only appear in summands arising from arrows entirely contained in the first block of the ladder diagram. The corresponding relation is true for any Grassmannian $\Gr(n,2)$, and thus is true simply as a statement about Schur polynomials. 
\end{eg}
\bibliographystyle{amsplain}
\bibliography{bibliography}
\end{document}